\documentclass[11pt]{amsart}

\usepackage[utf8]{inputenc}
\usepackage[T1]{fontenc}
\usepackage[english]{babel}
\usepackage{amsmath,amssymb,amsthm,mathtools}
\usepackage{geometry}
\usepackage{hyperref}

\hypersetup{
    colorlinks=true,
    linkcolor=blue,
    citecolor=blue,
    urlcolor=blue
}

\newtheorem{theorem}{Theorem}
\newtheorem{proposition}{Proposition}
\newtheorem{lemma}{Lemma}
\newtheorem{corollary}{Corollary}

\newcommand{\R}{\mathbb R}
\newcommand{\E}{\mathbb E}
\newcommand{\Prob}{\mathbb P}
\newcommand{\cP}{\mathcal P}
\newcommand{\eps}{\varepsilon}
\newcommand{\norm}[1]{\left\|#1\right\|}
\newcommand{\Var}{\operatorname{Var}}
\newcommand{\gauss}{\gamma_{m,n}}
\newcommand{\N}{\mathbb N}
\newcommand{\Nm}{\mathbb N_0}

\title[Almost norming vertices]{Almost norming vertices for homogeneous polynomials on the cube}

\author{Dami\'an Pinasco}
\address{Universidad Torcuato Di Tella.
Departamento de Matem\'atica y Estad\'istica.
CONICET.
Av. Figueroa Alcorta 7350,
C1428BCW Ciudad de Buenos Aires,
Argentina}
\email{dpinasco@utdt.edu}

\author{Ignacio Zalduendo}
\address{Universidad Torcuato Di Tella.
Departamento de Matem\'atica y Estad\'istica.
Av. Figueroa Alcorta 7350,
C1428BCW Ciudad de Buenos Aires,
Argentina}
\email{izalduendo@utdt.edu}

\subjclass[2020]{Primary 46G25; Secondary 60G15}

\keywords{Homogeneous polynomials, Gaussian random polynomials, Bombieri norm, norming sets, cube}

\date{}

\begin{document}

\begin{abstract}
Let \(m\geq1\) be fixed. We consider the space \(\cP_m(\R^n)\) of real
\(m\)-homogeneous polynomials on \(\R^n\), endowed with the standard Gaussian
measure \(\gamma_{m,n}\) associated with the Bombieri norm. We study how well
the norm of a typical polynomial on the unit ball of \(\ell_\infty^n\), namely
the cube \([-1,1]^n\), can be recovered from its values at the vertices.

For \(P\in\cP_m(\R^n)\), set
\[
M(P)=\max_{x\in[-1,1]^n}|P(x)|,
\qquad
V(P)=\max_{\eps\in\{-1,1\}^n}|P(\eps)|.
\]
If \(P_n\) is chosen according to \(\gamma_{m,n}\), we prove that the relative
loss
\[
1-\frac{V(P_n)}{M(P_n)}
\]
is of order at most \(n^{-1/2}\) in probability.
Consequently, for every \(0<\beta<1/2\),
\[
\gamma_{m,n}\left\{
P\in\cP_m(\R^n):
V(P)\geq (1-n^{-\beta})M(P)
\right\}
\longrightarrow1
\]
as \(n\to\infty\). Thus, with respect to the Bombieri Gaussian measure, the
vertices of the cube are asymptotically norming for homogeneous polynomials of
fixed degree.
\end{abstract}

\maketitle

\section{Introduction}

The problem considered in this paper belongs to the general question of where homogeneous polynomials on
finite-dimensional normed spaces attain their norm. More precisely, if \(A\) is
a prescribed subset of the unit sphere, one may ask how large is the set of
norm-one polynomials whose norm is attained on \(A\). This point of view was
adopted by Carando and Zalduendo in \cite{CarandoZalduendo2003}, with special
attention to the case of polynomials on \(\ell_\infty^n\).

The exact vertex problem asks whether, for large dimension, most homogeneous
polynomials attain their norm at a vertex. Once a probability measure $\gauss$ on
the space of polynomials has been chosen, this becomes the question whether
\[
\gauss\{P : P \text{ attains its norm at a vertex }\}\longrightarrow1.
\]

The geometry of the underlying unit ball is relevant. The corresponding problem
for \(\ell_1^n\) was studied by P\'erez-Garc\'ia and Villanueva
\cite{PerezGarciaVillanueva2004}. They prove that in the quadratic case ($m=2$) the result is false.
However, it is shown in \cite{PinascoZalduendo2012} that degree $m \geq 3$
exhibits a different behaviour. This is one of the reasons why the vertex
problem for the cube should be treated as a specific question rather than as a
formal consequence of a general finite-dimensional principle.

In order to turn this question into a quantitative one, a measure on
the space of homogeneous polynomials must be chosen. This is not merely a formal issue: this space
does not carry a preferred Euclidean structure coming
from the normed-space geometry of \(\ell_\infty^n\). Following the construction
used in \cite{PinascoZalduendo2012}, we regard the space of $m$-homogeneous polynomials over $\R^n$,
which we denote by
\(\cP_m(\R^n)\), as the dual of the
symmetric tensor product \(\otimes^{m,s}\R^n\), endowed with the Hilbert space
structure induced by the Euclidean structure of \(\R^n\). The resulting norm on
\(\cP_m(\R^n)\) is the Bombieri norm, a Hilbertian norm on polynomial spaces
appearing naturally in the study of polynomials in several variables; see, for
instance, \cite{BeauzamyBombieriEnfloMontgomery1990}. We shall work with the
standard Gaussian measure corresponding to this Hilbert space structure, and
denote it by \(\gauss\). The precise coordinate expression of this measure is
recalled in the next section.

Several related works study local versions of the vertex problem. In
\cite{PinascoZalduendo2019}, the probability that a homogeneous polynomial has a
local maximum at a vertex is computed in terms of the sharpness of the vertex.
In \cite{PinascoSmuclerZalduendo2021}, local maxima at vertices of the simplex
are studied through orthant probabilities for correlated normal random variables.
These approaches use first-order conditions: at a vertex one studies the signs
of suitable directional derivatives. The problem is translated into the
study of Gaussian orthant probabilities.

We work on \(\R^n\) endowed with the sup norm. Its unit ball is the cube
\[
B_{\ell_\infty^n}=[-1,1]^n,
\]
and its extreme points are the vertices
\[
\{-1,1\}^n.
\]

For \(P\in\cP_m(\R^n)\), we set
\[
M(P)=\max \{|P(x)| : x \in [-1,1]^n \}
\]
and
\[
V(P)=\max \{|P(\varepsilon)| : \varepsilon \in \{-1,1\}^n \}.
\]
Thus \(M(P)\) is the norm of \(P\) on the cube, while \(V(P)\) is the largest
value attained at the vertices. Clearly, $0\leq V(P)\leq M(P)$.

The equality
\[
M(P)=V(P)
\]
is the exact vertex-attainment property. A quantitative way to approach this
problem is to measure how close the vertices are to being norming. We consider
the relative defect
\[
1-\frac{V(P)}{M(P)}.
\]
Thus, \(V(P)\geq(1-\eta)M(P)\) precisely when the vertices are \(\eta\)-norming
for \(P\). Our main quantitative estimate shows that, if \(P_n\) is chosen
according to \(\gamma_{m,n}\), then
\[
1-\frac{V(P_n)}{M(P_n)}
=
O_{\Prob}(n^{-1/2}).
\]
In particular, for every \(0<\beta<1/2\),
\[
n^\beta\left(
1-\frac{V(P_n)}{M(P_n)}
\right)
\longrightarrow0
\]
in probability. Consequently,
\[
\gamma_{m,n}\left\{
P\in\cP_m(\R^n):
V(P)\geq (1-n^{-\beta})M(P)
\right\}
\longrightarrow1.
\]

The proof is based on a structural decomposition of polynomials into a sum of their
multilinear component and repeated-variable component. Since the multilinear component attains its norm at vertices and
asymptotically dominates the repeated-variable component in supremum norm, our
result follows from quantitative comparison estimates.

For a polynomial
\(P_n\) chosen according to \(\gauss\), we write
\[
P_n=Q_n+R_n,
\]
where \(Q_n\) is the multilinear part of \(P_n\), and \(R_n\) collects all
monomials with repeated variables. 
Multilinear polynomials attain their sup-norm on the cube at vertices. The main point of the proof is that, when the
degree is fixed and the dimension tends to infinity, the multilinear part is the dominant contribution to the norm on the cube in
high dimension. More precisely, the norm of
\(Q_n\) grows like \(n^{(m+1)/2}\), while the repeated-variable part $R_n$ grows at
most like \(n^{m/2}\), with high probability.

This gap in the orders of growth implies that the contribution of \(R_n\) is
negligible in comparison with that of \(Q_n\). Since \(Q_n\) is normed by the
vertices, a deterministic comparison then shows that the vertices are almost norming for \(P_n\). 
This yields the asserted relative error estimate.

The appearance of the exponent \(1/2\) is a consequence of the combinatorial
gap between square-free and repeated-variable monomials. The multilinear part contains \(\binom{n}{m}\) square-free monomials,
whereas the largest repeated-variable blocks contain only
\(\binom{n}{m-1}\) monomials. This
difference in size ultimately produces the factor \(n^{-1/2}\) governing the
relative error.

Thus the exact question
\[
\gauss\{P:M(P)=V(P)\}\longrightarrow1
\]
is not decided by this estimate. The result proved here gives a quantitative asymptotic form
of the vertex problem: although the maximum need not be attained exactly at a
vertex, any possible gain obtained at non-vertex points is negligible in
relative norm, with defect bounded by \(O_{\Prob}(n^{-1/2})\).

The proof combines elementary estimates, Gaussian process methods, and Gaussian
concentration for Lipschitz functions.

The paper is organized as follows. In Section 2 we collect the preliminary material on homogeneous polynomials, the
Bombieri norm, the corresponding Gaussian measure, and the probabilistic
notation used throughout the paper. In Section 3 we give a deterministic reduction showing that it is
enough to control the repeated-variable part of the polynomial. In Section 4 we
estimate the multilinear part. In Section 5 we control the repeated-variable
part. In Section 6 we prove the quantitative defect estimate and derive the almost
norming result as a corollary.

\section{Preliminaries}

We shall use the following standard probabilistic notation. The expression
\[
A_n=O_{\Prob}(a_n)
\]
means that \(A_n\) is bounded above by a constant multiple of \(a_n\), up to an
event whose probability can be made arbitrarily small. Equivalently, if
\((A_n)\) is a sequence of nonnegative random variables and \((a_n)\) is a
sequence of positive numbers, then \(A_n=O_{\Prob}(a_n)\) means that for every
\(\eta>0\) there exists \(C>0\) such that
\[
\Prob(A_n>C a_n)<\eta
\]
for all sufficiently large \(n\). 

We denote by
\[
\cP_m(\R^n)
\]
the real vector space of all \(m\)-homogeneous polynomials on \(\R^n\), that is,
all polynomials \(P:\R^n\to\R\) satisfying
\[
P(\lambda x)=\lambda^mP(x)
\qquad
(\lambda\in\R,\ x\in\R^n).
\]
Equivalently, every \(P\in\cP_m(\R^n)\) can be written as
\[
P(x)=
\sum_{\substack{\alpha\in\Nm^n\\ |\alpha|=m}}
a_\alpha x^\alpha,
\]
where
\[
|\alpha|=\alpha_1+\cdots+\alpha_n,
\qquad
x^\alpha=x_1^{\alpha_1}\cdots x_n^{\alpha_n}.
\]
Throughout the paper the degree \(m\) is fixed, whereas the dimension \(n\)
tends to infinity.

The Bombieri norm on \(\cP_m(\R^n)\) is given by
\[
\norm{P}_B^2=
\sum_{\substack{\alpha\in\Nm^n\\ |\alpha|=m}}
a_\alpha^2\frac{\alpha!}{m!},
\qquad
\alpha!=\alpha_1!\cdots\alpha_n!.
\]
This is the norm induced by the Hilbert structure on the symmetric tensor
product. We refer to \cite{PinascoZalduendo2012} for its use in this
probabilistic context, and to \cite{BeauzamyBombieriEnfloMontgomery1990} for
the Bombieri norm in polynomial inequalities.

Let \(\gauss\) denote the standard Gaussian measure associated with the Hilbert
space structure on \(\cP_m(\R^n)\) whose norm is \(\norm{\cdot}_B\). In
Bombieri coordinates, a polynomial chosen according to \(\gauss\) can be written
as
\[
P_n(x)=
\sum_{\substack{\alpha\in\Nm^n\\ |\alpha|=m}}
\sqrt{\binom{m}{\alpha}}\,g_\alpha x^\alpha,
\]
where \((g_\alpha)_{|\alpha|=m}\) are independent standard real Gaussian
variables and
\[
\binom{m}{\alpha}=\frac{m!}{\alpha_1!\cdots\alpha_n!}.
\]
This is the standard Gaussian measure associated with the Bombieri norm. In the
literature on random polynomials, the same normalization is often referred to as
the Bombieri--Weyl or Kostlan Gaussian; see, for instance,
\cite{Kostlan1993} and \cite{EdelmanKostlan1995}.

A direct computation gives, for all \(x,y\in\R^n\),
\[
\E[P_n(x)P_n(y)]
=
\sum_{\substack{\alpha\in\Nm^n\\ |\alpha|=m}}
\binom{m}{\alpha}x^\alpha y^\alpha
=
\left(\sum_{j=1}^n x_jy_j\right)^m
=
\langle x,y\rangle^m.
\]
In particular, if \(U\) is an orthogonal transformation, then
\[
P_n\circ U\stackrel{d}{=}P_n.
\]
Thus the chosen probability does not favor any Euclidean direction. This is the
sense in which the measure is compatible with the spherical geometry of
\(\R^n\).

Before entering the proof, we dispose of the linear case. If \(m=1\), then
\(P_n\) is a linear functional, and therefore its maximum and minimum on the
cube are attained at vertices. Hence
\[
M(P_n)=V(P_n)
\]
identically. Thus, from now on, we assume \(m\geq2\).

\section{A deterministic reduction}

For a polynomial chosen according to \(\gauss\), and for \(n\geq m\), we split
\[
P_n=Q_n+R_n,
\]
where \(Q_n\) is the multilinear part and \(R_n\) contains all monomials in
which at least one variable appears with exponent at least \(2\). Since the
multilinear part corresponds exactly to the square-free multiindices, and each
of them is uniquely represented by a set
\[
\{i_1<\cdots<i_m\}\subset\{1,\dots,n\},
\]
we may write
\[
Q_n(x)=
\sqrt{m!}
\sum_{1\leq i_1<\cdots<i_m\leq n}
g_{i_1,\dots,i_m}x_{i_1}\cdots x_{i_m}.
\]

We shall use the following elementary fact.

\begin{lemma} 
If $Q$ is a homogeneous multilinear polynomial, then $M(Q)=V(Q)$.
\end{lemma}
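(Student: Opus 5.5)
Since the vertices lie in the cube, $V(Q)\le M(Q)$ holds automatically, so the whole task is the reverse inequality $M(Q)\le V(Q)$. The plan is to show that, for a multilinear $Q$, the value at any point of the cube can be matched or beaten (in absolute value) at a vertex. The key structural fact is that a multilinear polynomial is affine in each variable separately. Homogeneity plays no role here; only multilinearity (each variable appearing with exponent at most $1$) is used.

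First, fix $x\in[-1,1]^n$ with $|Q(x)|=M(Q)$, which exists by compactness and continuity. For each coordinate $j$, freeze the other coordinates. The map $t\mapsto Q(x_1,\dots,x_{j-1},t,x_{j+1},\dots,x_n)$ then has the form $a+bt$, because every monomial contains $x_j$ to power at most one. An affine function of $t$ on $[-1,1]$ attains the maximum of its absolute value at an endpoint, since $|a+bt|$ is convex in $t$. So we may replace $x_j$ by $\pm1$ without decreasing $|Q|$.

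Second, we do this successively for $j=1,\dots,n$. At each step the coordinates already set to $\pm1$ stay fixed, and the new point still lies in the cube. After $n$ steps we reach a vertex $\eps\in\{-1,1\}^n$ with $|Q(\eps)|\ge|Q(x)|=M(Q)$. This gives $V(Q)\ge M(Q)$, and hence $M(Q)=V(Q)$.

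There is no real obstacle. The one point to state carefully is that multilinearity is preserved when the other variables are frozen, so the one-variable restriction really is affine. An alternative is to note that $|Q|$ is convex along each coordinate direction and to invoke a coordinatewise version of the maximum principle. The sequential argument above avoids needing that.
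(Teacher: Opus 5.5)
Your proof is correct and follows essentially the same route as the paper: both use that $Q$ is affine in each coordinate separately and push the coordinates to $\pm1$ one at a time. The only difference is cosmetic. You work with $|Q|$ directly, using convexity of $t\mapsto|a+bt|$ on $[-1,1]$. The paper instead writes $x_j=t\cdot 1+(1-t)(-1)$, shows separately that $\max Q$ and $\min Q$ are attained at vertices, and then combines the two.
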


\begin{proof}
Write
\[
Q(x)=\sum_{|S|=m}a_S\prod_{i\in S}x_i.
\]
Then \(Q\) is affine in each coordinate separately. Fix all coordinates
except \(x_j\). If \(x_j\in[-1,1]\), write
\[
x_j=t\cdot 1+(1-t)(-1)
\]
for some \(t\in[0,1]\). By affinity in the \(j\)-th coordinate,
\[
Q(x_1,\dots,x_j,\dots,x_n)
=
tQ(x_1,\dots,1,\dots,x_n)
+
(1-t)Q(x_1,\dots,-1,\dots,x_n).
\]
Therefore the value of \(Q\) at the original point lies between the two values
obtained by replacing \(x_j\) by \(1\) and by \(-1\). Hence one of these two
endpoint replacements does not decrease the value of \(Q\), and one of them
does not increase it.

Iterating this argument over the coordinates, the maximum of \(Q\) on the cube
is attained at a vertex, and the minimum of \(Q\) on the cube is also attained at
a vertex. Since
\[
M(Q)
=
\max\left\{
\max_{x\in[-1,1]^n}Q(x),
-\min_{x\in[-1,1]^n}Q(x)
\right\},
\]
the supremum norm of \(Q\) on the cube is attained at a vertex.
\end{proof}

For \(n\geq m\), square-free multiindices of degree \(m\) exist. Hence the
condition \(Q_n\equiv0\) is equivalent to the vanishing of all square-free
coefficients, and therefore defines a proper linear subspace of
\(\cP_m(\R^n)\). Since \(\gauss\) is a nondegenerate Gaussian measure on
\(\cP_m(\R^n)\), this subspace has \(\gauss\)-measure zero.

\begin{proposition}
For \(n\geq m\), outside a \(\gauss\)-null set, define
\[
t_n:=\frac{M(R_n)}{M(Q_n)}.
\]
Then
\[
1-\frac{V(P_n)}{M(P_n)}\leq 2t_n.
\]
In particular, if \(t_n\to0\) in probability, then
\[
\frac{V(P_n)}{M(P_n)}\to1
\]
in probability.
\end{proposition}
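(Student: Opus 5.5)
The plan is a direct deterministic comparison using the triangle inequality for the sup norm on the cube, together with the preceding Lemma. Outside the null set where \(Q_n\equiv 0\) we have \(M(Q_n)>0\), so \(t_n\) is well defined. Write \(M=M(P_n)\), \(V=V(P_n)\), \(t=t_n\).

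First I bound \(M(P_n)\) from above. Since \(M\) is a seminorm (indeed a norm) on polynomials restricted to the cube,
\[
M(P_n)\leq M(Q_n)+M(R_n)=(1+t)M(Q_n).
\]

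Next I bound \(V(P_n)\) from below. By the Lemma, \(M(Q_n)=V(Q_n)\), so there is a vertex \(\eps_0\) with \(|Q_n(\eps_0)|=M(Q_n)\). Since \(|R_n(\eps_0)|\leq M(R_n)\), we get
\[
V(P_n)\geq|P_n(\eps_0)|\geq |Q_n(\eps_0)|-|R_n(\eps_0)|\geq M(Q_n)-M(R_n)=(1-t)M(Q_n).
\]

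Combining the two bounds: if \(t<1\), then \(M(P_n)>0\), and
\[
1-\frac{V}{M}\leq 1-\frac{1-t}{1+t}=\frac{2t}{1+t}\leq 2t .
\]
If \(t\geq1\), the claim is trivial: either \(M(P_n)=0\), which forces \(P_n\equiv0\), a null event that can be excluded, or \(0\le V/M\le1\) gives \(1-V/M\leq 1\leq 2t\).

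For the \enquote{in particular} statement, the bound \(0\leq 1-V/M\leq 2t_n\) holds off a null set. Hence, for every \(\delta>0\),
\[
\Prob(|1-V/M|>\delta)\leq \Prob(t_n>\delta/2)\to0,
\]
so \(V(P_n)/M(P_n)\to1\) in probability.

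There is no real obstacle. The only points needing care are the well-definedness of \(t_n\) and of the ratio \(V/M\), which requires excluding the \(\gauss\)-null sets \(\{Q_n\equiv0\}\) and \(\{P_n\equiv0\}\), and the trivial case \(t\geq1\).
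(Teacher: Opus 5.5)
Your proof is correct and follows essentially the same argument as the paper. The triangle inequality gives $M(P_n)\leq(1+t_n)M(Q_n)$. The vertex lemma combined with the reverse triangle inequality at a vertex gives $V(P_n)\geq(1-t_n)M(Q_n)$, and comparing the two bounds yields $2t_n/(1+t_n)\leq 2t_n$. Your separate treatment of the case $t_n\geq1$ and of the null event $\{P_n\equiv0\}$ is a small refinement that the paper leaves implicit.
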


\begin{proof}
By the triangle inequality,
\[
M(P_n)
\leq
M(Q_n)
+
M(R_n).
\]
On the vertices, the reverse triangle inequality gives
\[
V(P_n)
=
V(Q_n + R_n)
\geq
V(Q_n)
-
V(R_n).
\]
By the previous lemma, $V(Q_n)=M(Q_n)$, 
and since $V(R_n) \leq M(R_n)$, 
\[
V(P_n)
\geq
M(Q_n)
-
M(R_n).
\]
Combining the upper estimate for \(M(P_n)\) with the lower
estimate for \(V(P_n)\) gives
\[
\frac{V(P_n)}{M(P_n)}
\geq
\frac{1-t_n}{1+t_n}.
\]
Hence
\[
1-\frac{V(P_n)}{M(P_n)}
\leq
1-\frac{1-t_n}{1+t_n}
=
\frac{2t_n}{1+t_n}
\leq
2t_n.
\]
\end{proof}

\section{The size of the multilinear part}

In this section we estimate the size of the multilinear part \(Q_n\) on the
cube. The upper estimate is a direct consequence of a Gaussian tail bound and a
union bound over the vertices. The lower estimate is less immediate: we first
construct exponentially many vertices which are well separated in Hamming
distance, and then use Sudakov's minoration for the corresponding Gaussian
process.

We shall use the elementary Gaussian tail estimate
\[
\Prob(|Z|>t)\leq 2\exp\left(-\frac{t^2}{2\sigma^2}\right),
\]
valid for every centered Gaussian random variable \(Z\) with variance
\(\sigma^2\). We begin with the upper estimate.

\begin{proposition}
For fixed \(m\),
\[
M(Q_n)=O_{\Prob}(n^{(m+1)/2}).
\]
More precisely, if \(C>\sqrt{2\log2}\), then
\[
\Prob\left(M(Q_n)>C n^{(m+1)/2}\right)\to0.
\]
\end{proposition}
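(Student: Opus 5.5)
By the Lemma on multilinear polynomials, $M(Q_n)=V(Q_n)=\max_{\eps\in\{-1,1\}^n}|Q_n(\eps)|$. The supremum over the cube therefore reduces to a maximum of $2^n$ centered Gaussian variables, and the plan is a union bound over the vertices combined with the Gaussian tail estimate recalled above.

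\textbf{Variance at a vertex.} For a vertex $\eps$, the variable $Q_n(\eps)=\sqrt{m!}\sum_{i_1<\cdots<i_m} g_{i_1,\dots,i_m}\eps_{i_1}\cdots\eps_{i_m}$ is centered Gaussian. Since $\eps_i^2=1$, its variance is
\[
\sigma_n^2 = m!\binom{n}{m}\leq n^m,
\]
and this does not depend on $\eps$. (Equivalently, $\sigma_n^2$ is the square-free part of $\langle\eps,\eps\rangle^m=n^m$.)

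\textbf{Union bound.} For $t>0$, the tail estimate gives
\[
\Prob\bigl(M(Q_n)>t\bigr)\leq \sum_{\eps}\Prob\bigl(|Q_n(\eps)|>t\bigr)\leq 2^{n+1}\exp\left(-\frac{t^2}{2n^m}\right).
\]
Take $t=Cn^{(m+1)/2}$. The right-hand side becomes
\[
2\exp\bigl(n\log 2 - C^2 n/2\bigr).
\]
This tends to $0$ exactly when $C^2/2>\log2$, that is, when $C>\sqrt{2\log 2}$. This proves the precise statement.

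\textbf{Conclusion.} The $O_{\Prob}$ statement follows at once. Given $\eta>0$, fix any $C>\sqrt{2\log2}$; for $n$ large the probability above is less than $\eta$.

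There is no real obstacle here. The only point requiring care is that the reduction to vertices uses the Lemma, and that the variance bound $m!\binom nm\le n^m$ is uniform in the vertex.
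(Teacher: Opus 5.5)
Your proposal is correct and follows the paper's proof essentially step for step. Both reduce to the vertices via the multilinear lemma, bound the vertex variance by $m!\binom{n}{m}\leq n^m$, and take a union bound over the $2^n$ vertices with $t=Cn^{(m+1)/2}$ to get the bound $2\exp(n\log2-C^2n/2)\to0$ for $C>\sqrt{2\log2}$.
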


\begin{proof}
By the vertex lemma, $M(Q_n)=V(Q_n)$.
For fixed \(\eps\), the random variable
\[
Q_n(\eps)=
\sqrt{m!}\sum_{1\leq i_1<\cdots<i_m\leq n}
 g_{i_1,\dots,i_m}\eps_{i_1}\cdots\eps_{i_m}
\]
is centered Gaussian with variance
\[
m!\binom{n}{m}.
\]
Since
\[
m!\binom{n}{m}
=
n(n-1)\cdots(n-m+1)
\leq n^m,
\]
the Gaussian tail estimate gives
\[
\Prob(|Q_n(\eps)|>t)
\leq
2\exp\left(-\frac{t^2}{2n^m}\right).
\]
By the union bound over the \(2^n\) vertices,
\[
\Prob(M(Q_n)>t)
\leq
2^{n+1}\exp\left(-\frac{t^2}{2n^m}\right).
\]
Taking \(t=Cn^{(m+1)/2}\), we obtain
\[
\Prob\left(M(Q_n)>Cn^{(m+1)/2}\right)
\leq
2\exp\left(n\log2-\frac{C^2}{2}n\right).
\]
This tends to zero whenever
\[
C>\sqrt{2\log2}.
\]
\end{proof}

We now turn to the lower estimate. In order to prove this estimate, we use three auxiliary ingredients: an entropy
bound, a construction of binary codes with two-sided Hamming distance bounds, and a canonical
separation estimate for the associated Gaussian process.

\begin{lemma}
If \(0<\delta<1/2\), then
\[
\sum_{k=0}^{\lfloor\delta n\rfloor}\binom{n}{k}
\leq
\exp(H(\delta)n),
\]
where
\[
H(\delta)=-\delta\log\delta-(1-\delta)\log(1-\delta).
\]
\end{lemma}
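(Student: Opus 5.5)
We use the standard Chernoff-type (exponential moment) argument for a binomial tail. The idea is to weight the partial binomial sum by a suitable power of \(\delta/(1-\delta)\), which is less than \(1\) because \(\delta<1/2\), so that it can be compared with a full binomial expansion equal to \(1\).

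Set \(\lambda=\delta/(1-\delta)\in(0,1)\). For every \(k\le\lfloor\delta n\rfloor\le \delta n\) we have \(k-\delta n\le0\), and since \(\lambda<1\) this gives \(\lambda^{k-\delta n}\ge1\). Hence
\[
\sum_{k=0}^{\lfloor\delta n\rfloor}\binom{n}{k}
\leq
\sum_{k=0}^{\lfloor\delta n\rfloor}\binom{n}{k}\lambda^{k-\delta n}
\leq
\lambda^{-\delta n}\sum_{k=0}^{n}\binom{n}{k}\lambda^{k}
=
\lambda^{-\delta n}(1+\lambda)^n .
\]
Since \(1+\lambda=1/(1-\delta)\) and \(\lambda^{-\delta}=\delta^{-\delta}(1-\delta)^{\delta}\), the right-hand side equals
\[
\left(\delta^{-\delta}(1-\delta)^{\delta-1}\right)^n
=
\exp\bigl(n(-\delta\log\delta-(1-\delta)\log(1-\delta))\bigr)
=
\exp(H(\delta)n).
\]

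An equivalent route is to multiply by \(1=(\delta+(1-\delta))^n\). One then observes that \(\delta^k(1-\delta)^{n-k}\ge\delta^{\delta n}(1-\delta)^{(1-\delta)n}\) for \(k\le\delta n\), because \(\delta/(1-\delta)<1\). Consequently
\[
\sum_{k\le\delta n}\binom{n}{k}\,\delta^{\delta n}(1-\delta)^{(1-\delta)n}\le1,
\]
which is the same inequality.

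There is no real obstacle. The only point requiring care is the direction of the inequality \(\lambda^{k-\delta n}\ge1\), which uses both \(k\le\delta n\) and \(\lambda<1\); the latter is exactly where the hypothesis \(\delta<1/2\) enters. The floor in \(\lfloor\delta n\rfloor\) is harmless, since the bound is applied only for \(k\le\delta n\).
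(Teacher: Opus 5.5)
Your proof is correct and follows essentially the same route as the paper's. The paper also takes $t=\delta/(1-\delta)$, uses $t^k\geq t^{\delta n}$ for $k\leq\delta n$ to bound the partial sum by $t^{-\delta n}(1+t)^n$, and identifies this quantity with $\exp(H(\delta)n)$; your explicit verification of that identity, and your note on where $\delta<1/2$ is used, fill in steps the paper leaves implicit.
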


\begin{proof}
For \(0<t<1\),
\[
(1+t)^n=\sum_{k=0}^n\binom{n}{k}t^k.
\]
Since \(t^k\geq t^{\delta n}\) for \(k\leq\delta n\),
\[
\sum_{k=0}^{\lfloor\delta n\rfloor}\binom{n}{k}
\leq t^{-\delta n}(1+t)^n.
\]
Choosing \(t=\delta/(1-\delta)\) gives
\[
t^{-\delta n}(1+t)^n
=
\exp(H(\delta)n),
\]
and the result follows.
\end{proof}

For \(\eps,\eta\in\{-1,1\}^n\), let
\[
d_H(\eps,\eta)=\#\{j:\eps_j\neq\eta_j\}
\]
be their Hamming distance.

\begin{lemma}
There are constants \(\delta\in(0,1/2)\), \(c>0\), and \(n_0\in\N\) such that,
for every \(n\geq n_0\), there is a set
\[
\mathcal A_n\subset\{-1,1\}^n
\]
with
\[
|\mathcal A_n|\geq e^{cn}
\]
and such that every distinct \(\eps,\eta\in\mathcal A_n\) satisfy
\[
\delta n\leq d_H(\eps,\eta)\leq (1-\delta)n.
\]
\end{lemma}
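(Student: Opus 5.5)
We construct \(\mathcal A_n\) greedily, as in the Gilbert--Varshamov argument, and use the entropy bound of the preceding lemma to count the points each selection excludes. Fix \(\delta\in(0,1/2)\) small enough that \(2H(\delta)<\log 2\); this is possible because \(H(\delta)\to0\) as \(\delta\to0\). For \(\eps\in\{-1,1\}^n\), the condition
\[
\delta n\leq d_H(\eps,\eta)\leq(1-\delta)n
\]
fails exactly when \(\eta\) lies in one of two sets. The first is the Hamming ball of radius \(<\delta n\) around \(\eps\). The second is the Hamming ball of radius \(<\delta n\) around \(-\eps\), since \(d_H(\eps,\eta)>(1-\delta)n\) is equivalent to \(d_H(-\eps,\eta)<\delta n\). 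We call the union of these two balls the \emph{forbidden region} of \(\eps\).

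By the entropy lemma each ball contains at most \(\exp(H(\delta)n)\) points. Hence the forbidden region of any \(\eps\) has at most \(2\exp(H(\delta)n)\) points.

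The greedy step runs as follows. Start with \(\mathcal A=\emptyset\) and \(S=\{-1,1\}^n\). Repeatedly pick any \(\eps\in S\), add it to \(\mathcal A\), and delete its forbidden region from \(S\). Each chosen point removes at most \(2e^{H(\delta)n}\) points from \(S\), so the procedure runs at least \(2^n/(2e^{H(\delta)n})\) times before \(S\) is empty.

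The resulting set satisfies the distance requirement. Suppose \(\eps\) was chosen before \(\eta\). Then \(\eta\) survived the deletion of \(\eps\)'s forbidden region, so \(\delta n\leq d_H(\eps,\eta)\leq(1-\delta)n\). Since \(d_H\) is symmetric, this relation holds for every pair of distinct points of \(\mathcal A\).

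For the size bound,
\[
|\mathcal A_n|\geq \tfrac12\exp\bigl((\log2-H(\delta))n\bigr).
\]
Take \(c=(\log 2-H(\delta))/2>0\). Then \(|\mathcal A_n|\geq e^{cn}\) as soon as \(\tfrac12 e^{(\log2-H(\delta))n/2}\geq1\), which holds for all \(n\geq n_0\) with \(n_0\) chosen suitably. Only \(H(\delta)<\log 2\) is actually needed; the stronger choice \(2H(\delta)<\log 2\) just leaves slack.

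The main point requiring care is the two-sided constraint. A standard code only needs a minimum distance, whereas here we must also exclude near-antipodal pairs. This is handled by the observation that near-antipodal points to \(\eps\) are exactly the points near \(-\eps\), so the forbidden region is just two balls instead of one. The second ball only doubles the count and costs a factor \(1/2\), which is absorbed into \(n_0\).

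A small check on the entropy lemma: it bounds the ball of radius \(\lfloor\delta n\rfloor\), which contains the strict-inequality ball of radius \(<\delta n\). So the lemma applies directly, with no need for strict inequalities in its statement.
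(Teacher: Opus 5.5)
Your proof is correct, but it takes a different route from the paper. The paper uses the probabilistic method. It draws $N=\lfloor e^{cn}\rfloor$ independent uniform vertices and bounds the probability that a fixed pair violates $\delta n\le d_H\le(1-\delta)n$ by $2e^{-\kappa_\delta n}$, using the entropy lemma for the lower tail and symmetry of $\operatorname{Bin}(n,1/2)$ for the upper tail. It then takes a union bound over the at most $N^2$ pairs, which forces $2c<\kappa_\delta=\log2-H(\delta)$. You instead run a Gilbert--Varshamov greedy (maximal packing) argument. Your observation that $d_H(\eps,\eta)>(1-\delta)n$ if and only if $d_H(-\eps,\eta)<\delta n$ is the deterministic counterpart of the paper's binomial-symmetry step: it turns the two-sided constraint into two Hamming balls whose size the entropy lemma controls.

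Your route gives a deterministic construction and avoids the quadratic loss from the pair union bound. It actually yields every exponent $c<\log2-H(\delta)$ for suitable $n_0$, so your choice $c=(\log2-H(\delta))/2$ is harmless but not sharp for your own method. Absorbing the factor $\tfrac12$ into $n_0$ also delivers $|\mathcal A_n|\ge e^{cn}$ exactly. By contrast, the paper's family has exactly $\lfloor e^{cn}\rfloor$ elements and needs a trivial decrease of $c$ to meet the stated inequality. The paper's route, in turn, shows that a random family of that size is good with high probability, not merely that a good family exists.

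For the application through Sudakov's minoration only $\log|\mathcal A_n|\gtrsim n$ matters, so the two arguments are interchangeable there. As you note, any $\delta\in(0,1/2)$ works in either proof, because $H(\delta)<\log2$ on that interval.
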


\begin{proof}
Choose \(0<\delta<1/2\), and set
\[
\kappa_\delta=\log2-H(\delta)>0.
\]
Let
\[
N=\lfloor e^{cn}\rfloor,
\]
where \(c>0\) will be chosen below. Choose \(N\) independent random vertices
\[
\xi^1,\dots,\xi^N
\]
of \(\{-1,1\}^n\), each uniformly distributed on the set of all vertices.

For two independent uniform vertices \(\xi,\xi'\), the random variable
\(d_H(\xi,\xi')\) has distribution \(\operatorname{Bin}(n,1/2)\). Hence
\[
\Prob(d_H(\xi,\xi')\leq \delta n)
=
2^{-n}
\sum_{k=0}^{\lfloor \delta n\rfloor}
\binom{n}{k}.
\]
By the entropy bound,
\[
\Prob(d_H(\xi,\xi')\leq \delta n)
\leq
\exp(-\kappa_\delta n).
\]
By symmetry of the binomial distribution,
\[
\Prob(d_H(\xi,\xi')\geq (1-\delta)n)
\leq
\exp(-\kappa_\delta n).
\]
Therefore
\[
\Prob\left(
d_H(\xi,\xi')\notin[\delta n,(1-\delta)n]
\right)
\leq
2\exp(-\kappa_\delta n).
\]

By the union bound over the pairs \(1\leq p<q\leq N\),
\[
\Prob\left(
\exists\,p<q:
d_H(\xi^p,\xi^q)\notin[\delta n,(1-\delta)n]
\right)
\leq
N^2\,2\exp(-\kappa_\delta n).
\]
Since \(N\leq e^{cn}\), this is bounded by
\[
2\exp((2c-\kappa_\delta)n).
\]
Choose \(c>0\) such that
\[
2c<\kappa_\delta.
\]
Then the last quantity tends to zero. Hence, for all sufficiently large \(n\),
there exists a choice of vertices
\[
\xi^1,\dots,\xi^N
\]
such that every pair satisfies
\[
\delta n\leq d_H(\xi^p,\xi^q)\leq(1-\delta)n.
\]
Taking
\[
\mathcal A_n=\{\xi^1,\dots,\xi^N\}
\]
gives the desired set.
\end{proof}

We now view the values of \(Q_n\) at the vertices as a Gaussian process. For
each \(\eps\in\{-1,1\}^n\), set
\[
X_\eps=Q_n(\eps).
\]
Thus \((X_\eps)_{\eps\in\{-1,1\}^n}\) is a centered Gaussian process indexed by
the vertices of the cube.

Although \(Q_n\) is a homogeneous Gaussian chaos of order \(m\), we shall not
use the general theory of Gaussian chaoses. The estimates needed below follow
from the explicit covariance structure of the square-free process, elementary
metric arguments, Sudakov's minoration, and Gaussian concentration. For general
background on Gaussian processes and related concentration methods, we refer to
\cite{LedouxTalagrand,TalagrandUpperLower}; for Gaussian chaoses, see for
instance \cite{Latala2006}.

Its canonical metric is the \(L^2\)-distance between increments:
\[
d(\eps,\eta)
=
\left(\E[(X_\eps-X_\eta)^2]\right)^{1/2}.
\]
This is the metric which enters Sudakov's minoration.

\begin{lemma}
Let \(0<\delta<1/2\). If
\[
\delta n\leq d_H(\eps,\eta)\leq(1-\delta)n,
\]
then there is a constant \(b_{m,\delta}>0\) such that
\[
d(\eps,\eta)\geq b_{m,\delta}n^{m/2}
\]
for all sufficiently large \(n\).
\end{lemma}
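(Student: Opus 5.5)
We compute $d(\eps,\eta)^2$ explicitly from the covariance structure of $Q_n$. Since the coefficients $g_{i_1,\dots,i_m}$ are independent standard Gaussians,
\[
d(\eps,\eta)^2
=
m!\sum_{1\leq i_1<\cdots<i_m\leq n}
\bigl(\eps_{i_1}\cdots\eps_{i_m}-\eta_{i_1}\cdots\eta_{i_m}\bigr)^2 .
\]
Write $D=\{j:\eps_j\neq\eta_j\}$, so that $|D|=k=d_H(\eps,\eta)$. For a set $S=\{i_1,\dots,i_m\}$ we have
\[
\prod_{i\in S}\eps_i=(-1)^{|S\cap D|}\prod_{i\in S}\eta_i .
\]
Hence the summand equals $4$ when $|S\cap D|$ is odd and $0$ otherwise. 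Therefore
\[
d(\eps,\eta)^2=4\,m!\,N_{\mathrm{odd}}(k),
\qquad
N_{\mathrm{odd}}(k)=\sum_{j\ \mathrm{odd}}\binom{k}{j}\binom{n-k}{m-j}.
\]
Everything reduces to a lower bound of order $n^m$ for $N_{\mathrm{odd}}(k)$, uniformly for $\delta n\leq k\leq(1-\delta)n$.

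For the lower bound, keep only the single term $j=1$:
\[
N_{\mathrm{odd}}(k)\geq k\binom{n-k}{m-1}.
\]
In the given range $k\geq\delta n$ and $n-k\geq\delta n$. For $n$ large, so that $\delta n\geq 2m$, we have
\[
\binom{n-k}{m-1}\geq \frac{(\delta n-m)^{m-1}}{(m-1)!}\geq \frac{(\delta n/2)^{m-1}}{(m-1)!}.
\]
This gives
\[
N_{\mathrm{odd}}(k)\geq \frac{\delta^m 2^{1-m}}{(m-1)!}\,n^m,
\]
and hence
\[
d(\eps,\eta)^2\geq 4m\,\delta^m2^{1-m}n^m .
\]
So one may take $b_{m,\delta}=\bigl(4m\,\delta^m2^{1-m}\bigr)^{1/2}$, and the bound holds for all $n\geq 2m/\delta$.

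The only point requiring care is the parity bookkeeping: the difference of the two monomials vanishes exactly when an even number of flipped coordinates lie in $S$. Once that identity is written down, the estimate is a crude single-term bound. Both sides of the Hamming constraint are used. The lower bound $k\geq\delta n$ guarantees enough flipped coordinates. The upper bound $k\leq(1-\delta)n$ guarantees enough unflipped ones, which matters because if all coordinates were flipped ($k=n$), the parity $|S\cap D|=m$ would be fixed and $d$ would vanish for even $m$.
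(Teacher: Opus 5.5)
Your proposal is correct and follows essentially the same route as the paper. Like the paper, you keep only the $k\binom{n-k}{m-1}$ sets $S$ with $|S\cap D|=1$, each contributing $4$, and use both sides of the Hamming constraint. Your parity identity and the explicit constant $b_{m,\delta}=\bigl(4m\,\delta^m2^{1-m}\bigr)^{1/2}$, valid for $n\geq 2m/\delta$, simply make the paper's unspecified $c_{m,\delta}$ concrete.
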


\begin{proof}
For a set \(S\subset\{1,\dots,n\}\), write
\[
\eps_S=\prod_{i\in S}\eps_i.
\]
Since
\[
Q_n(\eps)
=
\sqrt{m!}\sum_{\substack{S\subset\{1,\dots,n\}\\ |S|=m}}
g_S\eps_S,
\]
we have
\[
X_\eps-X_\eta
=
\sqrt{m!}\sum_{\substack{S\subset\{1,\dots,n\}\\ |S|=m}}
g_S(\eps_S-\eta_S).
\]
The coefficients \(g_S\) are independent standard Gaussian variables. Therefore
\[
d(\eps,\eta)^2
=
\E[(X_\eps-X_\eta)^2]
=
m!\sum_{\substack{S\subset\{1,\dots,n\}\\ |S|=m}}
(\eps_S-\eta_S)^2.
\]

Let
\[
D=\{i:\eps_i\neq\eta_i\},
\qquad
h=|D|=d_H(\eps,\eta).
\]
We count only those sets \(S\) which contain exactly one element of \(D\). For
such an \(S\), the products \(\eps_S\) and \(\eta_S\) differ by exactly one sign
change, and hence
\[
\eps_S=-\eta_S.
\]
Therefore
\[
(\eps_S-\eta_S)^2=4.
\]
The number of sets \(S\) of cardinality \(m\) with exactly one element in \(D\)
is
\[
h\binom{n-h}{m-1}.
\]
Under the assumption
\[
\delta n\leq h\leq (1-\delta)n,
\]
we have
\[
h\geq \delta n
\qquad\text{and}\qquad
n-h\geq \delta n.
\]
Since \(m\) is fixed, there is a constant \(c_{m,\delta}>0\) such that
\[
h\binom{n-h}{m-1}\geq c_{m,\delta}n^m
\]
for all sufficiently large \(n\). Hence
\[
d(\eps,\eta)^2
\geq
4m!c_{m,\delta}n^m.
\]
Taking square roots gives
\[
d(\eps,\eta)\geq b_{m,\delta}n^{m/2},
\]
for a suitable constant \(b_{m,\delta}>0\).
\end{proof}

\begin{lemma}[Sudakov minoration]
Let \((X_t)_{t\in T}\) be a centered Gaussian process with canonical metric
\[
d(s,t)=\left(\E[(X_s-X_t)^2]\right)^{1/2}.
\]
If \(t_1,\dots,t_N\in T\) satisfy \(d(t_i,t_j)\geq a\) for \(i\ne j\), then there is a universal constant \(c>0\) such that
\[
\E\sup_{t\in T}X_t\geq ca\sqrt{\log N}.
\]
\end{lemma}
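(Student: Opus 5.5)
The plan is to reduce to the case of a finite index set and then compare with an i.i.d. Gaussian vector via the Sudakov--Fernique (Slepian) comparison inequality. First, since the points are separated, they are distinct, and \(\E\sup_{t\in T}X_t\geq \E\max_{1\leq i\leq N}X_{t_i}\). Here the supremum over an arbitrary \(T\) is understood as the supremum of expectations of maxima over finite subsets, which is the standard convention. So it suffices to bound \(\E\max_{i\leq N}X_{t_i}\) from below. We may also assume \(N\geq2\); for \(N=1\) the right-hand side is \(0\), and the left-hand side is nonnegative because \(\E X_{t_1}=0\).

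Next, introduce independent standard Gaussians \(g_1,\dots,g_N\) and set \(Y_i=(a/\sqrt2)\,g_i\). Then for \(i\neq j\),
\[
\E[(Y_i-Y_j)^2]=a^2\leq d(t_i,t_j)^2=\E[(X_{t_i}-X_{t_j})^2].
\]
The Sudakov--Fernique inequality then gives
\[
\E\max_i X_{t_i}\geq \E\max_i Y_i=\frac{a}{\sqrt2}\,\E\max_{i\leq N}g_i .
\]
I will quote this comparison inequality as a standard fact, citing Ledoux--Talagrand. Proving it from scratch would require a Gaussian interpolation argument, and that is the one step I would not redo here.

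The remaining ingredient is the classical lower bound \(\E\max_{i\leq N}g_i\geq c_0\sqrt{\log N}\) for a universal \(c_0>0\). To prove it, take \(t=\sqrt{\log N}\) and use the Mills-ratio lower bound \(\Prob(g>t)\geq \frac{t}{1+t^2}\frac{e^{-t^2/2}}{\sqrt{2\pi}}\geq c_1/\sqrt{N\log N}\). This gives
\[
\Prob\Bigl(\max_i g_i\leq t\Bigr)=(1-\Prob(g>t))^N\leq \exp\bigl(-c_1\sqrt{N/\log N}\bigr),
\]
which is bounded away from \(1\) uniformly in \(N\geq N_1\). Then write
\[
\E\max g_i\geq t\,\Prob(\max g_i>t)-\E\bigl[(\max g_i)^-\bigr].
\]
The negative-part term is at most \(\E g_1^-\), a constant, since \(\max_i g_i\geq g_1\). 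So the bound holds for \(N\geq N_1\) large. For \(2\leq N<N_1\), use \(\E\max_i g_i\geq\E\max(g_1,g_2)=1/\sqrt\pi\) and shrink the constant.

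The main obstacle is therefore only the comparison step, which is imported from the literature. The rest is the elementary extreme-value computation above. Combining the steps gives \(\E\sup_T X_t\geq c\,a\sqrt{\log N}\) with \(c=c_0/\sqrt2\).
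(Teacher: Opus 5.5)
Your proof is correct, and it takes the same route as the literature the paper relies on: the paper does not prove this lemma but quotes it (Sudakov, Ledoux--Talagrand, Vershynin), and your argument is the standard textbook proof found in the cited Vershynin reference. That argument compares \((X_{t_i})\) with \((a/\sqrt2)g_i\) for i.i.d.\ standard Gaussians \(g_i\) via the Sudakov--Fernique inequality, and then uses the elementary bound \(\E\max_{i\leq N}g_i\geq c_0\sqrt{\log N}\) (your Mills-ratio step at \(t=\sqrt{\log N}\), the bound on the negative part by \(\E g_1^-\), and \(\E\max(g_1,g_2)=1/\sqrt\pi\) for small \(N\) all check out), so the only imported ingredient is the Sudakov--Fernique inequality, which is a legitimate standard citation.
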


We use Sudakov's minoration as a standard result on Gaussian processes; see
\cite{Sudakov1969} for the original result, and
\cite[Theorem 7.5.1]{Vershynin}, \cite{LedouxTalagrand}, or
\cite{TalagrandUpperLower} for modern treatments.
We are now ready to prove the lower estimate.

\begin{proposition}
There exists \(c_m>0\) such that
\[
\Prob\left(M(Q_n)\geq c_m n^{(m+1)/2}\right)\to1.
\]
\end{proposition}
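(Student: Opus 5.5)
The plan is to first lower-bound the expectation \(\E M(Q_n)\) via Sudakov's minoration, and then to pass from expectation to high probability using Gaussian concentration for Lipschitz functions.

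\emph{Step 1 (expectation).} Fix \(\delta, c, n_0\) as in the code lemma, and let \(\mathcal A_n\) be the set of at least \(e^{cn}\) vertices with pairwise Hamming distance in \([\delta n,(1-\delta)n]\). By the canonical separation lemma, the process \(X_\eps=Q_n(\eps)\) satisfies \(d(\eps,\eta)\geq b_{m,\delta}n^{m/2}\) for distinct \(\eps,\eta\in\mathcal A_n\), once \(n\) is large. Sudakov's minoration applied to the process indexed by \(T=\{-1,1\}^n\) then gives
\[
\E\sup_{\eps}X_\eps\geq c\, b_{m,\delta}n^{m/2}\sqrt{cn}=:2c_m n^{(m+1)/2}.
\]
Since \(M(Q_n)=V(Q_n)\geq \sup_\eps X_\eps\) by the vertex lemma, we get \(\E M(Q_n)\geq 2c_m n^{(m+1)/2}\). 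Strictly, it is cleaner to concentrate \(\sup_\eps X_\eps\) itself, and that is what we do next.

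\emph{Step 2 (concentration).} View \(F(g)=\sup_\eps X_\eps\) as a function of the standard Gaussian vector \(g=(g_S)_{|S|=m}\). For each vertex, \(X_\eps=\langle g, a_\eps\rangle\) with \(a_\eps=\sqrt{m!}(\eps_S)_S\). Hence \(F\) is Lipschitz with constant
\[
\sigma:=\max_\eps\|a_\eps\|_2=\sqrt{m!\binom nm}\leq n^{m/2}.
\]
The Gaussian concentration inequality for Lipschitz functions then gives
\[
\Prob\big(F\leq \E F-t\big)\leq \exp\big(-t^2/(2n^m)\big).
\]
Take \(t=c_m n^{(m+1)/2}\). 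The bound becomes \(\exp(-c_m^2 n/2)\to0\), so with probability tending to one,
\[
M(Q_n)\geq F\geq \E F - t\geq c_m n^{(m+1)/2}.
\]

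\emph{Main obstacle.} The key point is the mismatch of scales. The fluctuations of \(\sup_\eps X_\eps\) are of order \(n^{m/2}\), which is the maximal standard deviation, whereas its mean is of order \(n^{(m+1)/2}\). This gap of \(\sqrt n\) comes precisely from the \(\sqrt{\log N}\approx\sqrt{n}\) factor in Sudakov's minoration, and it is why the exponentially large separated family \(\mathcal A_n\) is essential. The only care needed is to check that the constants from the separation lemma and the Sudakov lemma are independent of \(n\) and that the "sufficiently large \(n\)" thresholds are compatible. Both hold, since \(\delta\) and \(c\) are fixed once and for all.
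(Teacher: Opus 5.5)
Your proposal is correct and follows essentially the same route as the paper: Sudakov's minoration on the separated family $\mathcal A_n$ gives $\E\sup_\eps X_\eps$ of order $n^{(m+1)/2}$, and Gaussian concentration with Lipschitz constant $\sqrt{m!\binom{n}{m}}\leq n^{m/2}$ upgrades this to high probability. The only differences are cosmetic: you concentrate $\sup_\eps X_\eps$ rather than $M(Q_n)=\max_\eps|X_\eps|$ (both have the same Lipschitz constant), and you use the letter $c$ for both the Sudakov constant and the code-lemma exponent, so one of them should be renamed.
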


\begin{proof}
Let \(X_\eps=Q_n(\eps)\). Take \(\mathcal A_n\subset\{-1,1\}^n\) as in Lemma 3. By Lemma 4, distinct points of \(\mathcal A_n\) are separated by at least \(b_{m,\delta}n^{m/2}\) in the canonical metric. Sudakov's minoration gives
\[
\E\max_{\eps\in\mathcal A_n}X_\eps
\geq c b_{m,\delta}n^{m/2}\sqrt{\log|\mathcal A_n|}
\geq c_m n^{(m+1)/2}.
\]
Since
\[
M(Q_n)=\max_{\eps\in\{-1,1\}^n}|X_\eps|,
\]
we get
\[
\E M(Q_n)\geq c_m n^{(m+1)/2}.
\]

It remains to pass from expectation to high probability. Let
\[
F((g_S)_{|S|=m})
=
V(Q_n)
=
M(Q_n).
\]
From the estimate above, there exists a constant \(A_m>0\), independent of
\(n\), such that
\[
\E F\geq A_m n^{(m+1)/2}.
\]

We claim that \(F\) is Lipschitz with constant
\[
L_n\leq \left(m!\binom{n}{m}\right)^{1/2}\leq n^{m/2}.
\]
Indeed, if \(g=(g_S)_{|S|=m}\) and \(g'=(g'_S)_{|S|=m}\), then, for every
\(\eps\in\{-1,1\}^n\),
\[
|Q_n^g(\eps)-Q_n^{g'}(\eps)|
\leq
\sqrt{m!}
\left(\sum_{|S|=m}(g_S-g'_S)^2\right)^{1/2}
\left(\sum_{|S|=m}\eps_S^2\right)^{1/2}.
\]
Since \(\eps_S^2=1\), we get
\[
|Q_n^g(\eps)-Q_n^{g'}(\eps)|
\leq
\left(m!\binom{n}{m}\right)^{1/2}\|g-g'\|_2.
\]
Taking the maximum over \(\eps\) proves the claimed Lipschitz estimate.

By Gaussian concentration for Lipschitz functions
\cite{LedouxTalagrand,Vershynin},
\[
\Prob(F\leq \E F-u)
\leq
\exp\left(-\frac{u^2}{2L_n^2}\right).
\]
Taking \(u=\E F/2\), we obtain
\[
\Prob\left(F\leq \frac12\E F\right)
\leq
\exp\left(-\frac{(\E F)^2}{8L_n^2}\right).
\]
Using
\[
\E F\geq A_m n^{(m+1)/2},
\qquad
L_n\leq n^{m/2},
\]
we get
\[
\frac{(\E F)^2}{8L_n^2}
\geq
\frac{A_m^2 n^{m+1}}{8n^m}
=
\frac{A_m^2}{8}n.
\]
Therefore
\[
\Prob\left(F\leq \frac12\E F\right)
\leq
\exp\left(-\frac{A_m^2}{8}n\right).
\]
In particular, with probability tending to one,
\[
F\geq \frac12\E F
\geq
\frac{A_m}{2}n^{(m+1)/2}.
\]
Setting \(c_m=A_m/2\) proves the assertion.
\end{proof}

\section{The repeated-variable part}

We now estimate the part of \(P_n\) formed by the monomials with repeated
variables. We decompose
\[
R_n=\sum_{r=1}^{m-1}R_{n,r},
\]
where \(R_{n,r}\) contains those monomials of total degree \(m\) involving
exactly \(r\) distinct variables. Thus \(r\leq m-1\), since at least one variable
must be repeated. The following proposition controls repeated-variable blocks.

\begin{proposition}
For each \(1\leq r\leq m-1\),
\[
M(R_{n,r})=O_{\Prob}(n^{(r+1)/2}).
\]
Consequently,
\[
M(R_n)=O_{\Prob}(n^{m/2}).
\]
\end{proposition}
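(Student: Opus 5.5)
The main difficulty is that \(R_{n,r}\) is not multilinear, so Lemma 1 does not apply directly. The plan is to remove this by a change of variables that makes each piece of \(R_{n,r}\) multi-affine in a larger set of independent variables. After that, the union-bound argument used for the upper estimate on \(M(Q_n)\) in Section 4 carries over, with the number of vertices raised from \(2^n\) to \(2^{rn}\).

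\emph{Step 1 (splitting by exponent pattern).} Fix \(1\le r\le m-1\). A monomial of degree \(m\) with exactly \(r\) distinct variables is determined by its support \(\{i_1,\dots,i_r\}\) together with the exponents attached to each index. Fix an exponent pattern \(a=(a_1,\dots,a_r)\), where \(a_k\ge1\), \(\sum a_k=m\), and \(a_1\ge\cdots\ge a_r\). Every such monomial can be written uniquely as \(x_{i_1}^{a_1}\cdots x_{i_r}^{a_r}\) with distinct indices, provided we require \(i_k<i_{k+1}\) whenever \(a_k=a_{k+1}\). Hence
\[
R_{n,r}=\sum_{a}R_{n,a},\qquad
R_{n,a}(x)=c_a\sum_{(i_1,\dots,i_r)\in I_a} g_{i_1\dots i_r}\,x_{i_1}^{a_1}\cdots x_{i_r}^{a_r},
\]
where \(I_a\) is the set of admissible index tuples and \(c_a=\sqrt{m!/(a_1!\cdots a_r!)}\). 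The coefficients \(g\) are independent standard Gaussians, each monomial appears exactly once, \(|I_a|\le n^r\), and the number of patterns depends only on \(m\). It therefore suffices to prove \(M(R_{n,a})=O_{\Prob}(n^{(r+1)/2})\) for each pattern \(a\).

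\emph{Step 2 (decoupling into a multi-affine form).} Introduce \(r\) vector variables \(y^{(1)},\dots,y^{(r)}\in[-1,1]^n\) and set
\[
\widetilde R_a(y^{(1)},\dots,y^{(r)})=c_a\sum_{(i_1,\dots,i_r)\in I_a} g_{i_1\dots i_r}\,y^{(1)}_{i_1}\cdots y^{(r)}_{i_r}.
\]
For \(x\in[-1,1]^n\), put \(y^{(k)}_i=x_i^{a_k}\in[-1,1]\). Then \(R_{n,a}(x)=\widetilde R_a(y)\), so \(M(R_{n,a})\) is at most the supremum of \(|\widetilde R_a|\) over \([-1,1]^{rn}\). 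Since the indices in each tuple are distinct, each of the \(rn\) scalar variables \(y^{(k)}_i\) appears in any monomial with degree at most \(1\). Thus \(\widetilde R_a\) is affine in each coordinate separately. The argument of Lemma 1 then shows that this supremum is attained at one of the \(2^{rn}\) vertices of \([-1,1]^{rn}\).

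\emph{Step 3 (union bound).} At a fixed vertex, \(\widetilde R_a\) is a centered Gaussian variable with variance \(c_a^2|I_a|\le m!\,n^r\). The Gaussian tail estimate and a union bound over the \(2^{rn}\) vertices give
\[
\Prob\big(M(R_{n,a})>Cn^{(r+1)/2}\big)\le 2^{rn+1}\exp\!\Big(-\frac{C^2 n}{2\,m!}\Big),
\]
which tends to \(0\) once \(C^2>2\,m!\,r\log 2\). Summing over the finitely many patterns \(a\) gives \(M(R_{n,r})=O_{\Prob}(n^{(r+1)/2})\).

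For the consequence, use the triangle inequality \(M(R_n)\le\sum_{r=1}^{m-1}M(R_{n,r})\). Since \(r+1\le m\), each term is \(O_{\Prob}(n^{m/2})\), and a finite sum of \(O_{\Prob}\) terms is again \(O_{\Prob}\), so \(M(R_n)=O_{\Prob}(n^{m/2})\). The key step is Step 2: once the substitution \(y^{(k)}_i=x_i^{a_k}\) turns the problem into a vertex problem for a multi-affine form, everything else is the same computation as in the upper estimate for \(M(Q_n)\).
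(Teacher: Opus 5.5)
Your proof is correct and follows the paper's argument. You split $R_{n,r}$ into finitely many exponent-pattern blocks, decouple via $y^{(k)}_i=x_i^{a_k}$ into a form affine in each of the $rn$ scalar coordinates, and apply the Gaussian tail bound with a union bound over $2^{rn}$ vertices. The only difference is bookkeeping: you index blocks by nonincreasing patterns with a tie-breaking rule, while the paper uses ordered compositions $a\in\mathcal C_{m,r}$ with $i_1<\cdots<i_r$. One small correction: $\widetilde R_a$ is multi-affine because each factor of a monomial comes from a different vector variable $y^{(k)}$, not because the indices in each tuple are distinct.
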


\begin{proof}
Fix \(1\leq r\leq m-1\). Let \(\mathcal C_{m,r}\) be the set of ordered compositions of \(m\) into
\(r\) positive parts, namely
\[
\mathcal C_{m,r}
=
\left\{
a=(a_1,\dots,a_r)\in\mathbb N^r:
a_1+\cdots+a_r=m
\right\}.
\]
Thus
\[
|\mathcal C_{m,r}|=\binom{m-1}{r-1},
\]
and in particular this number depends only on \(m\), not on \(n\).

Using the convention
\[
1\leq i_1<\cdots<i_r\leq n,
\]
we decompose
\[
R_{n,r}
=
\sum_{a\in\mathcal C_{m,r}}R_{n,r,a},
\]
where
\[
R_{n,r,a}(x)=
\sum_{1\leq i_1<\cdots<i_r\leq n}
c_a g^{(a)}_{i_1,\dots,i_r}
x_{i_1}^{a_1}\cdots x_{i_r}^{a_r},
\]
and
\[
c_a=
\sqrt{\frac{m!}{a_1!\cdots a_r!}}.
\]
Here \(g^{(a)}_{i_1,\dots,i_r}\) denotes the Gaussian coefficient associated
with the multiindex \(\alpha\) satisfying
\[
\alpha_{i_j}=a_j\quad (1\leq j\leq r),
\qquad
\alpha_i=0\quad\text{otherwise}.
\]
For each fixed \(r\) and \(a\), the coefficients
\[
g^{(a)}_{i_1,\dots,i_r}
\]
are independent standard Gaussian variables.

It is enough to prove the required upper bound for each fixed block
\(R_{n,r,a}\). Indeed, note that the number of ``patterns'' \(a\in\mathcal C_{m,r}\) depends
only on \(m\).

Define the multilinear form on \(r\) copies of the cube by
\[
\widetilde R_{n,r,a}(y^{(1)},\dots,y^{(r)})
=
\sum_{1\leq i_1<\cdots<i_r\leq n}
c_a g^{(a)}_{i_1,\dots,i_r}
y^{(1)}_{i_1}\cdots y^{(r)}_{i_r}.
\]
Set
\[
M_r(\widetilde R_{n,r,a})
=
\max_{y^{(1)},\dots,y^{(r)}\in[-1,1]^n}
\left|
\widetilde R_{n,r,a}(y^{(1)},\dots,y^{(r)})
\right|.
\]

If \(x\in[-1,1]^n\), put
\[
y_i^{(j)}=x_i^{a_j}
\qquad
(1\leq i\leq n,\ 1\leq j\leq r).
\]
Then \(y^{(j)}\in[-1,1]^n\) for every \(j\), and
\[
R_{n,r,a}(x)
=
\widetilde R_{n,r,a}(y^{(1)},\dots,y^{(r)}).
\]
Therefore
\[
M(R_{n,r,a})
\leq
M_r(\widetilde R_{n,r,a}).
\]

Since \(\widetilde R_{n,r,a}\) is affine in each scalar coordinate of each
vector variable separately, its maximum over \(([-1,1]^n)^r\) is attained at
\(r\)-tuples of vertices. Hence
\[
M_r(\widetilde R_{n,r,a})
=
\max_{\eps^{(1)},\dots,\eps^{(r)}\in\{-1,1\}^n}
\left|
\widetilde R_{n,r,a}(\eps^{(1)},\dots,\eps^{(r)})
\right|.
\]

For fixed signs \(\eps^{(1)},\dots,\eps^{(r)}\), the random variable
\[
\widetilde R_{n,r,a}(\eps^{(1)},\dots,\eps^{(r)})
\]
is centered Gaussian. Since all signs have modulus one and the coefficients are
independent, its variance is
\[
c_a^2\binom{n}{r}.
\]
In particular,
\[
\Var\left(
\widetilde R_{n,r,a}(\eps^{(1)},\dots,\eps^{(r)})
\right)
\leq
c_a^2 n^r.
\]
Therefore the Gaussian tail estimate gives
\[
\Prob\left(
\left|
\widetilde R_{n,r,a}(\eps^{(1)},\dots,\eps^{(r)})
\right|>t
\right)
\leq
2\exp\left(-\frac{t^2}{2c_a^2 n^r}\right).
\]

There are \(2^{rn}\) choices of
\[
(\eps^{(1)},\dots,\eps^{(r)})\in(\{-1,1\}^n)^r.
\]
By the union bound,
\[
\Prob\left(
M_r(\widetilde R_{n,r,a})>t
\right)
\leq
2^{rn+1}
\exp\left(-\frac{t^2}{2c_a^2 n^r}\right).
\]

Now take
\[
t=Cn^{(r+1)/2}.
\]
Then
\[
\Prob\left(
M_r(\widetilde R_{n,r,a})>Cn^{(r+1)/2}
\right)
\leq
2\exp\left(
rn\log2-\frac{C^2}{2c_a^2}n
\right).
\]
Since
\[
c_a^2=\frac{m!}{a_1!\cdots a_r!}\leq m!,
\]
we may choose \(C>0\), depending only on \(m\), such that
\[
\frac{C^2}{2m!}>(m-1)\log2.
\]
Then, for every \(1\leq r\leq m-1\) and every
\(a\in\mathcal C_{m,r}\),
\[
\frac{C^2}{2c_a^2}
\geq
\frac{C^2}{2m!}
>
(m-1)\log2
\geq
r\log2.
\]
Therefore
\[
\Prob\left(
M_r(\widetilde R_{n,r,a})>Cn^{(r+1)/2}
\right)
\longrightarrow0.
\]
Hence
\[
M_r(\widetilde R_{n,r,a})
=
O_{\Prob}(n^{(r+1)/2}),
\]
with constants depending only on \(m\).

Using the deterministic comparison
\[
M(R_{n,r,a})
\leq
M_r(\widetilde R_{n,r,a}),
\]
we get the same upper bound for the original block:
\[
M(R_{n,r,a})
=
O_{\Prob}(n^{(r+1)/2}).
\]

Finally, since
\[
R_{n,r}
=
\sum_{a\in\mathcal C_{m,r}}R_{n,r,a}
\]
and \(|\mathcal C_{m,r}|\) depends only on \(m\), the finite sum of these
bounds gives
\[
M(R_{n,r})
=
O_{\Prob}(n^{(r+1)/2}).
\]
Since
\[
R_n=\sum_{r=1}^{m-1}R_{n,r},
\]
and the largest exponent \((r+1)/2\) occurs when \(r=m-1\), we obtain
\[
M(R_n)
=
O_{\Prob}(n^{m/2}).
\]
\end{proof}

\section{Proof of the main result}

We first prove the quantitative estimate from which the almost norming
statement follows.

\begin{theorem}
Let \(m\geq1\) be fixed. For each \(n\), let \(P_n\) be chosen according to
the Bombieri Gaussian measure \(\gamma_{m,n}\) on \(\cP_m(\R^n)\). Then
\[
1-\frac{V(P_n)}{M(P_n)}
=
O_{\Prob}(n^{-1/2}).
\]
\end{theorem}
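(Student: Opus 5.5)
The plan is to combine the deterministic reduction of Proposition 1 with the two-sided size estimates of Sections 4 and 5. First I dispose of the trivial cases. For \(m=1\) the defect vanishes identically, as observed in the Preliminaries, so I assume \(m\geq2\). I also take \(n\geq m\), so that \(Q_n\not\equiv0\) almost surely and \(t_n=M(R_n)/M(Q_n)\) is well defined outside a \(\gauss\)-null set. Proposition 1 then gives, almost surely,
\[
1-\frac{V(P_n)}{M(P_n)}\leq 2t_n ,
\]
so it suffices to show that \(t_n=O_{\Prob}(n^{-1/2})\).

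Next I fix \(\eta>0\). By Proposition 4 there is a constant \(C>0\) such that \(\Prob(M(R_n)>Cn^{m/2})<\eta/2\) for all large \(n\). By Proposition 3 there is \(c_m>0\) with \(\Prob(M(Q_n)<c_m n^{(m+1)/2})\to0\), so this probability is also below \(\eta/2\) for large \(n\). Outside the union of these two events, which has probability less than \(\eta\), we get
\[
t_n\leq \frac{Cn^{m/2}}{c_m n^{(m+1)/2}}=\frac{C}{c_m}\,n^{-1/2}.
\]
Therefore
\[
\Prob\Bigl(1-\tfrac{V(P_n)}{M(P_n)}>\tfrac{2C}{c_m}n^{-1/2}\Bigr)<\eta
\]
for all sufficiently large \(n\), which is exactly the definition of \(O_{\Prob}(n^{-1/2})\).

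No step here is a genuine obstacle, since the substantial work sits in Propositions 3 and 4. The only point needing care is that the two probabilistic bounds are combined through a union bound on the complementary events, not by dividing \(O_{\Prob}\) quantities formally. The lower bound on \(M(Q_n)\) holds with probability tending to one, which is enough to control the denominator. The constant in the final bound depends only on \(m\) and \(\eta\).
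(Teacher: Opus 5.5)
Your proposal is correct and follows the paper's proof: you use Proposition 1 to reduce to bounding $t_n=M(R_n)/M(Q_n)$, then combine $M(R_n)=O_{\Prob}(n^{m/2})$ from Proposition 4 with the high-probability lower bound $M(Q_n)\geq c_m n^{(m+1)/2}$ from Proposition 3. The only difference is that you write out the union bound over the two exceptional events, which the paper compresses into the single line ``therefore $t_n=O_{\Prob}(n^{-1/2})$''.
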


\begin{proof}
The case \(m=1\) is immediate, since linear functionals attain their norm on
the cube at vertices. Thus assume \(m\geq2\).

The estimates above give
\[
M(R_n)=O_{\Prob}(n^{m/2})
\]
and, with high probability,
\[
M(Q_n)\geq c_m n^{(m+1)/2}.
\]
Therefore
\[
t_n=\frac{M(R_n)}{M(Q_n)}
=
O_{\Prob}(n^{-1/2}).
\]
By the deterministic reduction,
\[
1-\frac{V(P_n)}{M(P_n)}
\leq
2t_n.
\]
Hence
\[
1-\frac{V(P_n)}{M(P_n)}
=
O_{\Prob}(n^{-1/2}).
\]
\end{proof}

As a consequence, the vertices are asymptotically norming in the following
quantitative sense.

\begin{corollary}
Let \(m\geq1\) be fixed. For every \(0<\beta<1/2\),
\[
\gamma_{m,n}\left\{
P\in\cP_m(\R^n):
V(P)\geq (1-n^{-\beta})M(P)
\right\}
\longrightarrow1.
\]
In fact, if \(P_n\) is chosen according to \(\gamma_{m,n}\), then
\[
n^\beta\left(
1-\frac{V(P_n)}{M(P_n)}
\right)
\longrightarrow0
\]
in probability. In particular,
\[
\frac{V(P_n)}{M(P_n)}
\longrightarrow1
\]
in probability.
\end{corollary}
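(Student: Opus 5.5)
The corollary follows directly from the quantitative estimate in the Theorem. Set
\[
D_n=1-\frac{V(P_n)}{M(P_n)},
\]
which is defined outside a $\gamma_{m,n}$-null set (the set where $Q_n\equiv0$, hence $P_n\equiv 0$, is null). Since $0\leq V(P_n)\leq M(P_n)$, we have $0\le D_n\le 1$. For $m=1$ we have $D_n=0$ identically, so every claim is trivial; assume $m\geq2$.

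We first prove the convergence in probability $n^\beta D_n\to0$. Fix $\theta>0$ and $\eta>0$. By the Theorem, $D_n=O_{\Prob}(n^{-1/2})$, so there is $C>0$ with $\Prob(D_n>Cn^{-1/2})<\eta$ for all large $n$. On the complementary event,
\[
n^\beta D_n\leq Cn^{\beta-1/2}.
\]
Because $\beta<1/2$, the right-hand side is eventually smaller than $\theta$. Hence $\Prob(n^\beta D_n>\theta)<\eta$ for all sufficiently large $n$. Since $\eta$ is arbitrary, $n^\beta D_n\to0$ in probability.

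For the measure statement, take $\theta=1$. The event $V(P)\geq(1-n^{-\beta})M(P)$ is exactly $D_n\leq n^{-\beta}$, i.e. $n^\beta D_n\leq1$, up to the null set. Its probability therefore tends to one by the previous step.

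Finally, $D_n\leq n^{-\beta}\cdot n^\beta D_n$ with $n^{-\beta}\to0$, so $D_n\to0$ in probability, which is $V(P_n)/M(P_n)\to1$ in probability.

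None of these steps is a real obstacle. The only point needing care is that the $O_{\Prob}$ bound holds only for large $n$, which is harmless because all three statements are asymptotic.
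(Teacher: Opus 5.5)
Your proposal is correct and follows the same route as the paper: both obtain $n^\beta D_n = O_{\Prob}(n^{\beta-1/2})\to 0$ in probability directly from the Theorem, and then read off the measure statement and $V(P_n)/M(P_n)\to 1$ from it. You additionally write out the explicit $\eta$--$\theta$ argument and the null-set bookkeeping, which the paper leaves implicit.
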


\begin{proof}
By the theorem,
\[
1-\frac{V(P_n)}{M(P_n)}
=
O_{\Prob}(n^{-1/2}).
\]
Multiplying by \(n^\beta\), with \(0<\beta<1/2\), gives
\[
n^\beta\left(
1-\frac{V(P_n)}{M(P_n)}
\right)
=
O_{\Prob}(n^{\beta-1/2}).
\]
Since \(n^{\beta-1/2}\to0\), the last expression converges to \(0\) in
probability. This proves the stronger statement, and hence the stated
measure convergence.
\end{proof}

\medskip
\noindent\textbf{Use of generative AI.} During the preparation of this manuscript, OpenAI's ChatGPT was used as an assistive tool for language editing, manuscript organization, bibliographic searches, and the discussion of mathematical arguments. All mathematical content, proofs, calculations, and references included in the final manuscript were independently reviewed and verified by the authors. The authors assume full responsibility for the content of this work.


\begin{thebibliography}{99}

\bibitem{BeauzamyBombieriEnfloMontgomery1990}
B. Beauzamy, E. Bombieri, P. Enflo and H. L. Montgomery,
\emph{Products of polynomials in many variables},
J. Number Theory \textbf{36} (1990), no. 2, 219--245.

\bibitem{CarandoDimantPinasco2011}
D. Carando, V. Dimant and D. Pinasco,
\emph{On the convergence of random polynomials and multilinear forms},
J. Funct. Anal. \textbf{261} (2011), no. 8, 2135--2163.

\bibitem{CarandoZalduendo2003}
D. Carando and I. Zalduendo,
\emph{Where do homogeneous polynomials attain their norm?},
Publ. Math. Debrecen \textbf{62} (2003), no. 1--2, 19--28.

\bibitem{EdelmanKostlan1995}
A. Edelman and E. Kostlan,
\emph{How many zeros of a random polynomial are real?},
Bull. Amer. Math. Soc. \textbf{32} (1995), no. 1, 1--37.

\bibitem{Kostlan1993}
E. Kostlan,
\emph{On the distribution of roots of random polynomials},
in \emph{From Topology to Computation: Proceedings of the Smalefest}, Springer,
1993, 419--431.

\bibitem{Latala2006}
R. Lata{\l}a,
\emph{Estimates of moments and tails of Gaussian chaoses},
Ann. Probab. \textbf{34} (2006), no. 6, 2315--2331.

\bibitem{LedouxTalagrand}
M. Ledoux and M. Talagrand,
\emph{Probability in Banach Spaces: Isoperimetry and Processes},
Ergebnisse der Mathematik und ihrer Grenzgebiete, Springer, 1991.

\bibitem{PerezGarciaVillanueva2004}
D. P\'erez-Garc\'ia and I. Villanueva,
\emph{Where do homogeneous polynomials on \(\ell_1^n\) attain their norm?},
J. Approx. Theory \textbf{127} (2004), no. 1, 124--133.

\bibitem{PinascoSmuclerZalduendo2021}
D. Pinasco, E. Smucler and I. Zalduendo,
\emph{Orthant probabilities and the attainment of maxima on a vertex of a simplex},
Linear Algebra Appl. \textbf{610} (2021), 785--803.

\bibitem{PinascoZalduendo2012}
D. Pinasco and I. Zalduendo,
\emph{A probabilistic approach to polynomial inequalities},
Israel J. Math. \textbf{190} (2012), 67--82.

\bibitem{PinascoZalduendo2019}
D. Pinasco and I. Zalduendo,
\emph{On the measure of polynomials attaining maxima on a vertex},
Math. Inequal. Appl. \textbf{22} (2019), no. 2, 421--432.

\bibitem{Sudakov1969}
V. N. Sudakov,
\emph{Gaussian measures, Cauchy measures and \(\varepsilon\)-entropy},
Soviet Math. Dokl. \textbf{10} (1969), 310--313.

\bibitem{TalagrandUpperLower}
M. Talagrand,
\emph{Upper and Lower Bounds for Stochastic Processes: Modern Methods and Classical Problems},
Ergebnisse der Mathematik und ihrer Grenzgebiete, Springer, 2014.

\bibitem{Vershynin}
R. Vershynin,
\emph{High-Dimensional Probability: An Introduction with Applications in Data Science},
Cambridge Series in Statistical and Probabilistic Mathematics, Cambridge
University Press, 2018.

\end{thebibliography}
\end{document}